\documentclass[11pt]{amsart}

\usepackage[T1]{fontenc}
\usepackage[utf8]{inputenc}
\usepackage{lmodern}
\usepackage{amsmath,amssymb,mathtools}
\usepackage{microtype}
\usepackage[colorlinks=true,
            linkcolor=blue,
            citecolor=blue,
            urlcolor=blue]{hyperref}

\newtheorem{theorem}{Theorem}[section]
\newtheorem{proposition}[theorem]{Proposition}

\newtheorem{corollary}[theorem]{Corollary}
\theoremstyle{remark}

\newcommand{\R}{\mathbb R}
\newcommand{\N}{\mathbb N}
\newcommand{\ip}[2]{\langle #1,#2\rangle}
\newcommand{\norm}[1]{\lVert #1\rVert}
\newcommand{\hnorm}[1]{\lvert #1\rvert_2}
\newcommand{\starnorm}[1]{\lVert #1\rVert_*}

\title[Unbounded Convex Sets with the FPP]{An Unbounded Convex Set with the Fixed Point Property for Nonexpansive Mappings}

\author[Japón]{M. Japón}
\address{Universidad de Sevilla, Spain}
\email{japon@us.es}

\author[Laguna-Ricart]{Javier Laguna-Ricart}
\address{Universidad de Murcia, Spain}
\email{javier.l.r@um.es}

\subjclass[2020]{Primary 47H10, 46B20; Secondary 46B03, 47H09}
\keywords{nonexpansive mapping, fixed point property, unbounded convex set, equivalent renorming, normal structure, localization}
\thanks{The first author was supported by   Grant PID2023-148294NB-I00 funded by MICIU/AEI/ 10.13039/501100011033,   and the  projects “Programa operativo PPIT-FEDER Andalucía 2021-2027 SOL2024-31596 and SOL2024-31708, and María de Maeztu CEX2024-001517-M. The second author was supported by Fundaci\'{o}n S\'{e}neca - ACyT Regi\'{o}n de Murcia project 21955/PI/22, by Agencia Estatal de Investigación (Government of Spain) and ERDF project PID2021-122126NB-C32 and by the University of Murcia through its Predoctoral Contracts Programme under the Research Promotion Plan (“Programa de Contratos Predoctorales del Plan de Fomento de la Investigación de la Universidad de Murcia”)}
\begin{document}

\begin{abstract}
We construct an equivalent norm on $\R\oplus\ell_2$ and a nonempty closed convex unbounded subset having the fixed point property for nonexpansive self-mappings.  The ambient renormed space has the fixed point property on bounded closed convex subsets.  The main point is a geometric localization estimate at a distinguished point of the unbounded set; the fixed point conclusion then follows from the Kirk--Ray localization theorem.
\end{abstract}

\maketitle

\section{Introduction}

Let $C$ be a nonempty subset of a Banach space $X$.  We say that $C$ has the \emph{fixed point property for nonexpansive mappings} if every mapping $T\colon C\to C$ satisfying
\[
  \norm{Tx-Ty}\leq \norm{x-y}\qquad(x,y\in C)
\]
has a fixed point.

For bounded closed convex sets, fixed point theory is governed by compactness and geometric hypotheses such as normal structure.  Kirk's theorem asserts that every nonempty weakly compact convex set with normal structure has the fixed point property \cite{Kirk1965}.  The unbounded situation is different.  Ray proved that a nonempty closed convex subset of a Hilbert space has the fixed point property if and only if it is bounded \cite{Ray1980}; Sine later gave a short proof of the unbounded direction \cite{Sine1987}.  Dom\'inguez Benavides established the analogous failure of the fixed point property for every unbounded closed convex subset of $c_0$ \cite{Dominguez2012}.

The general Banach-space question nevertheless remained open in the literature on unbounded domains \cite{AlghamdiKirkShahzad2014} and was recorded explicitly by Kirk as Question~XV in \cite{Kirk2015}:
Can an unbounded closed convex subset of a Banach space have the fixed point property?

In September 2026, OpenAI claimed to have proved a related longstanding problem, that every reflexive Banach space has the fixed point property for nonexpansive mappings on nonempty bounded closed convex subsets \cite{OpenAI2026}.

We give an affirmative construction.  The ambient linear space is $\R\oplus\ell_2$, equipped with an equivalent non-Hilbertian norm.  The set is a positive affine slice
\[
  C=\bigl\{(s,x):s\geq0,\ x\geq0,\ s+\ip{w}{x}=1\bigr\},
  \qquad w=(2^{-n})_{n\geq1}.
\]
It is unbounded because the weight $w_n$ tends to zero.  On the other hand, the norm is tilted so that, along every sequence escaping to infinity in $C$, a distinguished point $p=(1,0)$ is eventually strictly closer than any fixed point of $C\setminus\{p\}$.  This makes the Kirk--Ray localization sets bounded and reduces every nonexpansive self-map of $C$ to the bounded case.

Our main result is the following.

\begin{theorem}\label{thm:main}
There exist a Banach space $X$ and a nonempty closed convex unbounded subset $C\subseteq X$ such that every nonexpansive mapping $T\colon C\to C$ has a fixed point.  The space $X$ may be chosen to be an equivalent renorming of $\R\oplus\ell_2$.
\end{theorem}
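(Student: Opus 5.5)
The proof splits into three parts. First, build an equivalent norm $\norm{\cdot}$ on $X=\R\oplus\ell_2$ and the slice $C$ from the introduction, and check that bounded closed convex subsets of $X$ have the fixed point property. Second, prove a localization estimate at $p=(1,0)$. Third, feed that estimate into the Kirk--Ray localization argument to reduce an arbitrary nonexpansive $T\colon C\to C$ to the bounded case.

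\textbf{Step 1 (norm, set, bounded FPP).} I would take the norm to be
\[
\norm{(s,x)}=\bigl(\hnorm{(s,x)}^2+\phi(s,x)^2\bigr)^{1/2},
\]
where $\hnorm{\cdot}$ is the Euclidean norm and $\phi$ is a seminorm that is small near the direction of $p$ and larger transversally. A concrete first candidate is $\phi(s,x)=\lambda\,\lvert s-\ip{w}{x}\rvert$ for a suitable $\lambda>0$, or a sup-type tilt in the $s$ coordinate.

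\begin{itemize}
\item The norm is $\ell_2$-sum of a Hilbert norm and a seminorm, so it is uniformly convex.
\item Hence $X$ is reflexive with normal structure, and Kirk's theorem \cite{Kirk1965} gives the fixed point property on bounded closed convex sets.
\item $C$ is closed, convex, nonempty (it contains $p$), and unbounded, since $(1-\ip{w}{te_n}\,,\,te_n)\in C$ for $0\le t\le 2^n$.
\end{itemize}

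\textbf{Step 2 (localization estimate).} This is the main obstacle. The statement to prove is: for every sequence $z_k\in C$ with $\norm{z_k}\to\infty$ and every $y\in C\setminus\{p\}$,
\[
\norm{z_k-p}<\norm{z_k-y}\quad\text{for all large }k,
\]
and I would aim for the quantitative form $\liminf_k\bigl(\norm{z_k-y}-\norm{z_k-p}\bigr)>0$.

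\begin{itemize}
\item Write $z_k=(s_k,x_k)$. Because $s_k+\ip{w}{x_k}=1$ and all coordinates are nonnegative, we have $s_k\in[0,1]$, so escape to infinity happens only in $x_k$, along directions where $w$ is negligible.
\item Expand $\norm{z_k-y}^2-\norm{z_k-p}^2$. The Hilbert part contributes $-2\ip{z_k}{y-p}+\hnorm{y}^2-\hnorm{p}^2$.
\item The term $\ip{x_k}{y_x}$ has a sign, since $x_k,y_x\ge0$. The danger is that it is of order $\hnorm{x_k}$ and wrong-signed.
\item The tilt $\phi$ must supply a positive term of the same order, which is exactly where the constraint $s+\ip{w}{x}=1$ is used. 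To handle $\ip{x_k}{y_x}$, I expect to split $x_k$ into a head on coordinates $n\le N$ and a tail. On the head, $x_k$ is bounded because $w_n\ge 2^{-N}$. On the tail, $y_x$ is small.
\item After dividing by $\norm{z_k-y}+\norm{z_k-p}$, the difference of norms should tend to a quantity like $c\,(1-s_y)>0$.
\item Separately, the case $s_y=1$, $y\ne p$ cannot occur, because $x\ge0$ and $\ip{w}{x}=0$ force $x=0$.
\end{itemize}

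If this first choice of $\phi$ fails, I would instead let $\phi$ penalize $\hnorm{x}$ directly in a non-Hilbertian way, for example $\phi(s,x)=\lambda\max(\lvert s\rvert,\hnorm{x})$, and redo the expansion.

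\textbf{Step 3 (reduction to bounded sets).} Let $T\colon C\to C$ be nonexpansive.

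\begin{itemize}
\item If $Tp=p$ we are done, so assume $Tp\ne p$.
\item Consider the Kirk--Ray sets $D_r=\{y\in C:\limsup_n\norm{T^np-y}\le r\}$ with $r=\limsup_n\norm{T^np-p}$. These are closed, convex, and $T$-invariant, since $\limsup_n\norm{T^{n+1}p-Ty}\le\limsup_n\norm{T^np-y}$.
\item If the orbit $(T^np)$ has an unbounded subsequence, Step 2 with $y=Tp$ gives $\norm{T^{n_k}p-p}<\norm{T^{n_k}p-Tp}$ eventually, with a uniform gap. Combined with $\norm{T^{n_k+1}p-Tp}\le\norm{T^{n_k}p-p}$ and a standard limsup bookkeeping along the orbit, this should yield a contradiction. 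That shows the orbit is bounded.
\item With the orbit bounded, $D_r$ is nonempty (it contains $p$), bounded, closed, convex, and $T$-invariant.
\item Step 1 then produces a fixed point of $T$ in $D_r\subseteq C$.
\end{itemize}

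This reduction is the Kirk--Ray localization theorem, which the abstract says the paper invokes. The real content is therefore the estimate in Step 2.
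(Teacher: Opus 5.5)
\textbf{The gap is in Steps 1--2.} You never exhibit a norm for which the localization estimate holds, and both of your concrete candidates provably fail.

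For $\phi=\lambda\lvert s-\ip{w}{x}\rvert$, the square of your norm is a positive definite quadratic form. So $X$ is isometrically a Hilbert space, and Ray's theorem already forbids any unbounded closed convex set with the fixed point property.

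For $\phi=\lambda\max(\lvert s\rvert,\norm{x}_2)$, the norm coincides with the Hilbert norm $\sqrt{s^2+(1+\lambda^2)\norm{x}_2^2}$ on the region $\norm{x}_2\geq\lvert s\rvert$. That region contains $z-p$ and $z-y$ for every $z$ far out in $C$.

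Concretely, take $y=(1-\varepsilon/2,\varepsilon e_1)\in C\setminus\{p\}$ and $z_k=(0,2^ke_k)$ with $k\geq2$. One computes $\norm{z_k-y}^2-\norm{z_k-p}^2=-(1+4\lambda^2)\varepsilon+O(\varepsilon^2)$ for the first candidate and $-\varepsilon+O(\varepsilon^2)$ for the second, uniformly in $k$. So for small $\varepsilon$ every $z_k$ lies in $G_C(p,y)$, which is therefore unbounded.

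The structural reason is visible in your own expansion. Write $y=(b,v)$ and $r_k=\norm{x_k}_2$. For such norms $\norm{z_k-y}^2-\norm{z_k-p}^2=O(1)-2\kappa\ip{x_k}{v}$, with $\kappa=1$ or $1+\lambda^2$. Your head/tail splitting only shows $\ip{x_k}{v}=o(r_k)$. After dividing by $\norm{z_k-y}+\norm{z_k-p}\sim r_k$, the difference of norms tends to $0$. The positive limit $c(1-s_y)$ you anticipate therefore cannot appear.

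\textbf{The missing idea.} You need a seminorm whose norming functional at $z_k-p=(-d_k,x_k)$, where $d_k=1-s_k$, has an $s$-component bounded away from $0$ with the right sign. This must hold uniformly over all directions $g_k=x_k/r_k$ in the positive cone.

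The paper uses $\sup_{u\in B_+}\lvert s/10+\ip{x}{u}\rvert$. Since $x_k\geq0$, for $r_k\geq2$ this supremum equals $r_k-d_k/10$, is attained at $u=g_k$, and dominates the Hilbert term. Hence $\starnorm{z_k-p}=\phi_k(z_k-p)$ with $\phi_k=(1/10,g_k)$. It follows that $\starnorm{z_k-y}-\starnorm{z_k-p}\geq\phi_k(p-y)=\frac{1-b}{10}-\ip{v}{g_k}\to\frac{1-b}{10}>0$, using $g_k\rightharpoonup0$. That weak convergence is what your head/tail splitting really delivers.

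Your passing mention of a ``sup-type tilt'' gestures in this direction. However, the proposal neither specifies it nor isolates this norming-functional property as the crux.

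With this seminorm, your $\ell_2$-sum framework would in fact go through. The $\phi^2$-difference is then at least a constant times $r_k$, which beats the $o(r_k)$ Hilbert error. It would also let you replace the paper's Bynum-coefficient argument by plain uniform convexity.

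Your Step 3 is fine once the bookkeeping is written as $\norm{T^{n+1}p-p}<\norm{T^{n+1}p-Tp}\leq\norm{T^np-p}$ whenever $T^{n+1}p\notin G_C(p,Tp)$.
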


All Banach spaces in this paper are real.
\section{A localization principle}

For a closed convex subset $D$ of a Banach space and $a,b\in D$, put
\begin{equation}\label{eq:G-def}
  G_D(a,b)=\bigl\{z\in D:\norm{z-a}\geq\norm{z-b}\bigr\}.
\end{equation}

We shall use the following classical localization theorem.

\begin{theorem}[Kirk--Ray]\label{thm:KR}
Let $X$ be a Banach space in which every nonempty bounded closed convex subset has the fixed point property.  Let $D\subseteq X$ be closed and convex, and let $T\colon D\to D$ be nonexpansive.  If $G_D(a,Ta)$ is bounded for some $a\in D$, then $T$ has a fixed point in $D$; see \cite[Theorem~2.1]{KirkRay1979}.
\end{theorem}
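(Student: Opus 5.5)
The plan is to produce a bounded sequence of approximate fixed points of $T$ from the boundedness of $G_D(a,Ta)$. From that sequence I will build a nonempty bounded closed convex $T$-invariant subset of $D$, and then apply the hypothesis that bounded closed convex subsets of $X$ have the fixed point property.

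\emph{Step 1 (contractive approximations).} For $t\in(0,1)$ define $T_t\colon D\to D$ by $T_t z=(1-t)a+tTz$. This map lands in $D$ because $D$ is convex, and it is a strict contraction with constant $t$. Since $D$ is closed, Banach's contraction principle gives a unique $x_t\in D$ with
\[
x_t=(1-t)a+tTx_t .
\]

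\emph{Step 2 (the localization step, which I expect to be the key point).} I will show that $Tx_t\in G_D(a,Ta)$ for every $t\in(0,1)$. From $x_t-a=t(Tx_t-a)$ we get $\norm{Tx_t-a}=t^{-1}\norm{x_t-a}\geq\norm{x_t-a}$. Nonexpansiveness gives $\norm{Tx_t-Ta}\leq\norm{x_t-a}$. Combining,
\[
\norm{Tx_t-a}\geq\norm{x_t-a}\geq\norm{Tx_t-Ta},
\]
which says exactly that $Tx_t\in G_D(a,Ta)$. Since this set is bounded, the family $\{Tx_t\}$ is bounded. Hence $\{x_t\}$ is bounded, because each $x_t$ is a convex combination of $a$ and $Tx_t$. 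It follows that
\[
\norm{x_t-Tx_t}=(1-t)\norm{Tx_t-a}\to0\quad(t\to1).
\]
Choosing $t_n\uparrow1$ and writing $x_n=x_{t_n}$, we obtain a bounded sequence in $D$ with $\norm{x_n-Tx_n}\to0$.

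\emph{Step 3 (invariant bounded set).} Put $\varphi(z)=\limsup_n\norm{x_n-z}$ for $z\in D$. This function is finite, convex and $1$-Lipschitz. For $z\in D$,
\[
\varphi(Tz)\leq\limsup_n\bigl(\norm{x_n-Tx_n}+\norm{Tx_n-Tz}\bigr)\leq\varphi(z).
\]
Fix $r\geq\varphi(x_1)$, which is finite because the sequence is bounded. Let $K=\{z\in D:\varphi(z)\leq r\}$. Then $K$ contains $x_1$, so it is nonempty. It is closed and convex because $\varphi$ is continuous and convex. It is bounded because $\varphi(z)\geq\norm{z}-\sup_n\norm{x_n}$. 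Finally, $T(K)\subseteq K$ by the displayed inequality. By hypothesis $K$ has the fixed point property, so $T$ has a fixed point in $K\subseteq D$.

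The only nonroutine part is Step 2: noticing that the natural contractive approximants send $Tx_t$, rather than $x_t$ itself, into the localization set $G_D(a,Ta)$. Once the approximate fixed points are known to be bounded, Step 3 is the standard asymptotic-radius construction.
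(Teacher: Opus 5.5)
Your proof is correct and complete for the statement as formulated. The paper gives no argument of its own here: it simply imports Theorem~\ref{thm:KR} from Kirk and Ray, so the only comparison available is with a citation.

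All three of your steps check out:
\begin{itemize}
\item $T_t$ is a $t$-contraction of the complete set $D$, so $x_t$ exists.
\item The identity $x_t-a=t(Tx_t-a)$ and nonexpansiveness give $\norm{Tx_t-Ta}\leq\norm{x_t-a}\leq\norm{Tx_t-a}$. Hence $Tx_t\in G_D(a,Ta)$, so $\{x_t\}$ is bounded and $\norm{x_t-Tx_t}=(1-t)\norm{Tx_t-a}\to0$.
\item The sublevel set $K=\{z\in D:\limsup_n\norm{x_n-z}\leq r\}$ is nonempty, closed, convex, bounded and $T$-invariant, so the hypothesis applies to $T|_K$.
\end{itemize}

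Compared with the citation, your self-contained route buys two things. First, it is transparent at exactly the step where unboundedness is handled. Second, it gives a slightly sharper statement: you only apply the fixed point property to $K\subseteq D$, so it is enough that the nonempty bounded closed convex subsets of $D$ (not of all of $X$) have the fixed point property.

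In the paper's application, with $D=C$ and $a=p$, your Step~2 shows concretely why Proposition~\ref{prop:G-bounded} is the right input: it keeps the approximants $x_t=(1-t)p+tTx_t$ bounded. The citation buys only brevity.
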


The following immediate consequence is the form needed below.

\begin{corollary}\label{cor:apex}
Let $X$ have the fixed point property on all nonempty bounded closed convex subsets, and let $D\subseteq X$ be closed and convex.  Suppose that there is $p\in D$ such that $G_D(p,y)$ is bounded for every $y\in D\setminus\{p\}$.  Then $D$ has the fixed point property.
\end{corollary}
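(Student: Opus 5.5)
We prove Corollary~\ref{cor:apex} by a two-case split on whether the distinguished point $p$ is fixed, with Theorem~\ref{thm:KR} applied at $a=p$ in the nontrivial case. Let $T\colon D\to D$ be nonexpansive.

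\emph{Case $Tp=p$.} Then $p$ itself is a fixed point of $T$ and there is nothing to prove.

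\emph{Case $Tp\neq p$.} Put $y=Tp$. Then $y\in D\setminus\{p\}$, because $T$ maps $D$ into $D$ and $y\neq p$. By hypothesis the set $G_D(p,y)=G_D(p,Tp)$ is bounded. This is exactly the boundedness condition in Theorem~\ref{thm:KR} with $a=p$. The remaining hypotheses of that theorem also hold: $X$ has the fixed point property on all nonempty bounded closed convex subsets, and $D$ is closed and convex. Theorem~\ref{thm:KR} therefore yields a fixed point of $T$ in $D$.

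In both cases $T$ has a fixed point. Since $T$ was an arbitrary nonexpansive self-map of $D$, the set $D$ has the fixed point property.

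There is no real obstacle here. The only point to check is that $Tp$ lies in $D\setminus\{p\}$ when $p$ is not fixed, so that the hypothesis on $G_D(p,\cdot)$ applies with $y=Tp$. All the substantive work is contained in Theorem~\ref{thm:KR}.
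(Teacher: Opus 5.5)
Your proof is correct and matches the paper's argument: if $Tp=p$ you are done, and otherwise $Tp\in D\setminus\{p\}$, so $G_D(p,Tp)$ is bounded by hypothesis and Theorem~\ref{thm:KR} applies with $a=p$.
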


\begin{proof}
Let $T\colon D\to D$ be nonexpansive.  If $Tp=p$, there is nothing to prove.  Otherwise $G_D(p,Tp)$ is bounded, and Theorem~\ref{thm:KR} applies.
\end{proof}

\section{The renormed space}

Let
\[
  H=\ell_2(\N;\R),
  \qquad
  H_+=\{x=(x_n)\in H:x_n\geq0\text{ for every }n\},
\]
and
\[
  B_+=\{u\in H_+:\norm{u}_2\leq1\}.
\]
On the vector space $X=\R\oplus H$ define
\begin{equation}\label{eq:star-norm}
  \starnorm{(s,x)}
  =\max\left\{
       \frac45\sqrt{s^2+\norm{x}_2^2},
       \sup_{u\in B_+}\left|\frac{s}{10}+\ip{x}{u}\right|
     \right\}.
\end{equation}
We write
\[
  \hnorm{(s,x)}=\sqrt{s^2+\norm{x}_2^2}
\]
for the Hilbert product norm.

\begin{proposition}\label{prop:norm}
Formula \eqref{eq:star-norm} defines a complete norm on $X$, and
\begin{equation}\label{eq:norm-equivalence}
  \frac45\hnorm{(s,x)}
  \leq \starnorm{(s,x)}
  \leq \frac{\sqrt{101}}{10}\hnorm{(s,x)}
  \qquad((s,x)\in X).
\end{equation}
In particular, $(X,\norm{\cdot}_*)$ is reflexive.
\end{proposition}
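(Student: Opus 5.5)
We prove the norm axioms, then the two-sided estimate \eqref{eq:norm-equivalence}, and read off completeness and reflexivity from the equivalence with $\hnorm{\cdot}$.

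\emph{Norm axioms.} Write $\starnorm{(s,x)}=\max\{N_1,N_2\}$ with $N_1=\frac45\hnorm{(s,x)}$ and
\[
N_2(s,x)=\sup_{u\in B_+}\left|\tfrac{s}{10}+\ip{x}{u}\right|.
\]
For each fixed $u$, the map $(s,x)\mapsto \frac{s}{10}+\ip{x}{u}$ is a linear functional, so its absolute value is a seminorm. A supremum of seminorms is a seminorm once it is finite, and finiteness follows from the upper estimate below. Hence $N_2$ is a seminorm, while $N_1$ is a norm. The maximum of a norm and a seminorm is a norm: definiteness comes from $N_1$, and homogeneity and the triangle inequality pass to maxima.

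\emph{Lower bound.} The inequality $\starnorm{\cdot}\geq N_1$ is immediate from the definition.

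\emph{Upper bound.} For $u\in B_+$, Cauchy--Schwarz in $\R\oplus H$ applied to the vectors $(s,x)$ and $(\frac1{10},u)$ gives
\[
\left|\tfrac{s}{10}+\ip{x}{u}\right|\leq \hnorm{(s,x)}\sqrt{\tfrac1{100}+\norm{u}_2^2}\leq \frac{\sqrt{101}}{10}\hnorm{(s,x)}.
\]
Since $\frac45<\frac{\sqrt{101}}{10}\approx1.005$, both $N_1$ and $N_2$ are bounded by $\frac{\sqrt{101}}{10}\hnorm{(s,x)}$, which proves \eqref{eq:norm-equivalence}.

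\emph{Completeness and reflexivity.} Because $\starnorm{\cdot}$ is equivalent to the Hilbert norm on $\R\oplus H$, the space $(X,\starnorm{\cdot})$ has the same Cauchy sequences and the same topology as a Hilbert space, so it is complete. Reflexivity is a linear-topological property, preserved under isomorphism, and $(X,\hnorm{\cdot})$ is a Hilbert space, hence reflexive; therefore $(X,\starnorm{\cdot})$ is reflexive. There is no real obstacle here; the only point needing care is that $N_2$ is finite before calling it a seminorm, and the Cauchy--Schwarz bound supplies exactly that.
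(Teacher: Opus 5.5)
Your proof is correct and follows essentially the same route as the paper. You show that $N_2$ is a finite supremum of absolute values of linear functionals, hence a seminorm; its maximum with $\frac45\hnorm{\cdot}$ is then a norm; and completeness and reflexivity pass over from the Hilbert product norm through the equivalence. The one cosmetic difference is in the upper bound: you apply Cauchy--Schwarz once in $\R\oplus H$ to $(s,x)$ and $(\frac1{10},u)$, whereas the paper first bounds by $\frac{|s|}{10}+\norm{x}_2$ and then uses Cauchy--Schwarz in $\R^2$.
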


\begin{proof}
The second term in \eqref{eq:star-norm} is finite by Cauchy–Schwarz and is the supremum of absolute values of continuous linear functionals; hence it is a seminorm.  Its maximum with the norm $(4/5)\hnorm{\cdot}$ is therefore a norm.  The lower estimate in \eqref{eq:norm-equivalence} is immediate.  For the upper estimate, Cauchy--Schwarz gives
\[
  \sup_{u\in B_+}\left|\frac{s}{10}+\ip{x}{u}\right|
  \leq \frac{|s|}{10}+\norm{x}_2
  \leq \frac{\sqrt{101}}{10}\sqrt{s^2+\norm{x}_2^2}.
\]
Since $4/5<\sqrt{101}/10$, this also dominates the first term in \eqref{eq:star-norm}.  Thus the two norms are equivalent.  Completeness and reflexivity follow from the corresponding properties of the Hilbert product norm.
\end{proof}

\begin{proposition}\label{prop:bounded-fpp}
Every nonempty bounded closed convex subset of $(X,\norm{\cdot}_*)$ has the fixed point property for nonexpansive self-mappings.
\end{proposition}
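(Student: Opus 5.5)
The plan is to show that $(X,\norm{\cdot}_*)$ has uniform normal structure by comparing it with the Hilbert norm $\hnorm{\cdot}$ through the equivalence \eqref{eq:norm-equivalence}. Once that is established, Kirk's theorem \cite{Kirk1965} finishes the proof, because $X$ is reflexive by Proposition~\ref{prop:norm}. Write $a=4/5$ and $b=\sqrt{101}/10$, so that $a\hnorm{\cdot}\le\starnorm{\cdot}\le b\hnorm{\cdot}$. The key numerical fact is
\[
  \frac{b}{a}=\frac{\sqrt{101}}{8}<\sqrt2 ,
\]
since $101<128$.

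First I use the Hilbert-space Jung-type estimate for self-Chebyshev radii: the normal structure coefficient of a Hilbert space is $\sqrt2$ (Bynum). Concretely, for every bounded closed convex $K\subseteq X$ with more than one point,
\[
  r_2(K):=\inf_{z\in K}\sup_{y\in K}\hnorm{z-y}\le \frac{1}{\sqrt2}\,\operatorname{diam}_2(K).
\]
This is a classical fact that I would cite. If a self-contained argument is wanted, one can take the Hilbert asymptotic centre of a sequence in $K$ and apply the parallelogram law.

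Next I transfer this estimate to $\norm{\cdot}_*$. Given $z\in K$, we have $\sup_{y\in K}\starnorm{z-y}\le b\sup_{y\in K}\hnorm{z-y}$, so $r_*(K)\le b\,r_2(K)$. In the other direction, $\operatorname{diam}_2(K)\le a^{-1}\operatorname{diam}_*(K)$. Combining these gives
\[
  r_*(K)\le \frac{b}{a\sqrt2}\,\operatorname{diam}_*(K),\qquad \frac{b}{a\sqrt2}=\frac{\sqrt{101}}{8\sqrt2}<1 .
\]
Since $\operatorname{diam}_*(K)>0$, some point $z\in K$ satisfies $\sup_{y\in K}\starnorm{z-y}<\operatorname{diam}_*(K)$, so $z$ is a non-diametral point. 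Thus every bounded closed convex subset with positive diameter has a non-diametral point, so $X$ has (uniform) normal structure.

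Finally, let $K$ be a nonempty bounded closed convex subset of $X$. By reflexivity, $K$ is weakly compact. By the previous step, every closed convex subset of $K$ with more than one point has a non-diametral point, so $K$ has normal structure. Kirk's theorem then gives the fixed point property on $K$; one-point sets are trivial.

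The main obstacle is the first step: the precise Hilbert constant $\sqrt2$ for the self-radius, with the infimum taken over points of $K$ itself. Everything else is a direct computation. I would handle this step by quoting Bynum's value $N(\ell_2)=\sqrt2$. It is worth noting the margin: the comparison only requires $b/a<\sqrt2$, and $b/a\approx1.256$, so even a somewhat weaker Hilbert estimate would still suffice.
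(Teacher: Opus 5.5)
Your proof is correct and follows the paper's route: Bynum's value $N(\ell_2)=\sqrt2$ together with the distortion bound $\sqrt{101}/8<\sqrt2$ gives normal structure, and reflexivity plus Kirk's theorem finish. The only difference is that you derive the stability inequality $N(X_2)\le d(X_2,X)N(X)$ by hand from the two norm comparisons, where the paper cites Bynum's Theorem~5.

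One small caveat concerns your optional self-contained remark. As stated, taking an asymptotic centre does not obviously give $r_2(K)\le\operatorname{diam}_2(K)/\sqrt2$, because an asymptotic centre only controls distances to the tail of the chosen sequence, not to all of $K$. This is also why the simpler midpoint/modulus-of-convexity estimate, which only gives $2/\sqrt3<\sqrt{101}/8$, would not suffice here. So keep the citation to Bynum (or to Jung's theorem in Hilbert space).
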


\begin{proof}
Let $X_2$ denote the same linear space with the Hilbert product norm.  By \eqref{eq:norm-equivalence}, the identity isomorphism between $X_2$ and $(X,\norm{\cdot}_*)$ has distortion at most
\begin{equation}\label{eq:distortion}
  \frac{\sqrt{101}/10}{4/5}=\frac{\sqrt{101}}8<\sqrt2.
\end{equation}
Let $N(\cdot)$ denote Bynum's normal-structure coefficient and let $d(\cdot,\cdot)$ denote Banach--Mazur distance.  The Hilbert value is $N(X_2)=\sqrt2$, and Bynum's stability theorem gives
\[
  N(X_2)\leq d(X_2,X)N(X);
\]
see \cite[Theorem~5]{Bynum1980}.  In view of \eqref{eq:distortion}, it follows that $N(X)>1$, so $(X,\norm{\cdot}_*)$ has normal structure.  Since the space is reflexive by Proposition~\ref{prop:norm}, its bounded closed convex subsets are weakly compact.  Kirk's fixed point theorem \cite{Kirk1965} now gives the conclusion.
\end{proof}

\section{The unbounded convex set}

Let
\[
  w=(2^{-n})_{n\geq1}\in\ell_2
\]
and define
\begin{equation}\label{eq:C-def}
  C=\bigl\{(s,x)\in\R\oplus H:
       s\geq0,\ x\in H_+,\ s+\ip{w}{x}=1
     \bigr\}.
\end{equation}
Set
\begin{equation}\label{eq:p-def}
  p=(1,0)\in C.
\end{equation}

\begin{proposition}\label{prop:C-basic}
The set $C$ is nonempty, closed, convex, and unbounded in $(X,\norm{\cdot}_*)$.
\end{proposition}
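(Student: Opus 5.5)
The four properties are elementary, and I would check them in the order nonempty, convex, closed, unbounded.

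\emph{Nonempty.} This is witnessed by $p=(1,0)$, which satisfies $s=1\ge0$, $x=0\in H_+$ and $s+\ip{w}{0}=1$.

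\emph{Convex.} $C$ is the intersection of three convex sets:
\begin{itemize}
\item the half-space $\{s\geq0\}$;
\item the cone $\R\oplus H_+$;
\item the affine hyperplane $\{s+\ip{w}{x}=1\}$.
\end{itemize}
Hence $C$ is convex.

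\emph{Closed.} By Proposition~\ref{prop:norm}, $\norm{\cdot}_*$ and $\hnorm{\cdot}$ are equivalent, so the two norms give the same topology and it suffices to argue in $\hnorm{\cdot}$. The maps $(s,x)\mapsto s$, $(s,x)\mapsto x_n$ for each $n$, and $(s,x)\mapsto s+\ip{w}{x}$ are continuous linear functionals, the last because $w\in\ell_2$. So $C$ is an intersection of preimages of closed sets:
\[
C=\{s\ge0\}\cap\bigcap_{n}\{x_n\ge0\}\cap\{s+\ip{w}{x}=1\},
\]
and is therefore closed.

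\emph{Unbounded.} This is the only step with any content, and it is still direct. I would use the points
\[
z_n=(0,\,2^n e_n),\qquad n\geq1,
\]
where $e_n$ is the $n$-th unit vector. Each $z_n$ lies in $C$: its coordinates are nonnegative, and
\[
\ip{w}{2^ne_n}=2^n\cdot 2^{-n}=1.
\]
By the lower estimate in \eqref{eq:norm-equivalence},
\[
\starnorm{z_n}\geq\tfrac45\hnorm{z_n}=\tfrac45\,2^n\to\infty .
\]
Hence $C$ is unbounded in $(X,\norm{\cdot}_*)$.

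No step is a real obstacle. The only points needing care are that $w\in\ell_2$, which makes $\ip{w}{\cdot}$ continuous, and that the norm equivalence lets us pass freely between $\norm{\cdot}_*$ and $\hnorm{\cdot}$ for both closedness and unboundedness.
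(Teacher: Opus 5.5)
Your proof is correct and follows essentially the paper's argument, including the same witnesses $(0,2^n e_n)$. The only difference is that you bound $\starnorm{z_n}\geq\frac45\,2^n$ via the norm equivalence, whereas the paper computes $\starnorm{q_n}=2^n$ exactly; the bound is all that unboundedness needs (the paper's proof also records $0\leq s\leq1$ on $C$ for later use in Proposition~\ref{prop:G-bounded}, but that is not part of this statement).
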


\begin{proof}
The positive cone $H_+$ is closed and convex, and the functional
$(s,x)\mapsto s+\ip{w}{x}$ is linear and continuous.  Hence \eqref{eq:C-def} defines a closed convex set; it is nonempty because it contains $p$.

If $(s,x)\in C$, then $x,w\geq0$, so
\begin{equation}\label{eq:s-range}
  0\leq s=1-\ip{w}{x}\leq1.
\end{equation}
Let $(e_n)$ be the standard unit vector basis of $\ell_2$ and put
\[
  q_n=(0,2^n e_n).
\]
Since $\ip{w}{2^n e_n}=1$, one has $q_n\in C$.  Moreover,
\[
  \starnorm{q_n}
  =\max\left\{\frac45\,2^n,
       \sup_{u\in B_+}|\ip{2^n e_n}{u}|
     \right\}
  =2^n.
\]
Thus $C$ is unbounded.
\end{proof}

\section{The localization geometry}

The next proposition is the central point of the construction.

\begin{proposition}\label{prop:G-bounded}
For every $y\in C\setminus\{p\}$, the set
\[
  G_C(p,y)=\bigl\{z\in C:\starnorm{z-y}\leq\starnorm{z-p}\bigr\}
\]
is bounded.
\end{proposition}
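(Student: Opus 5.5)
The plan is to show directly that every point $z=(t,x)\in C$ with $\norm{x}_2$ sufficiently large satisfies $\starnorm{z-p}<\starnorm{z-y}$. Then $G_C(p,y)$ lies in $\{(t,x)\in C:\norm{x}_2\leq R\}$ for some $R=R(y)$. By \eqref{eq:s-range} and \eqref{eq:norm-equivalence}, that set is bounded.

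\emph{Step 1: an upper bound for $\starnorm{z-p}$.} Write $z-p=(t-1,x)$, where $t-1=-\ip{w}{x}\in[-1,0]$ and $x\geq0$. For $u\in B_+$ we have $\ip{x}{u}\in[0,\norm{x}_2]$. Hence $(t-1)/10+\ip{x}{u}$ lies in $[-(1-t)/10,\ \norm{x}_2-(1-t)/10]$. So the supremum term in \eqref{eq:star-norm} is at most $\max\{\norm{x}_2-(1-t)/10,\ 1/10\}$. The Hilbert term is at most $\frac45(\norm{x}_2+1)$, and this is smaller than $\norm{x}_2-\frac1{10}$ once $\norm{x}_2>9/2$. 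Therefore, for $\norm{x}_2$ large,
\[
  \starnorm{z-p}\leq \norm{x}_2-\frac{1-t}{10}.
\]

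\emph{Step 2: a lower bound for $\starnorm{z-y}$.} Write $y=(s,v)$. Since $y\neq p$, we have $v\neq0$, and as $v\geq0$, $v\neq0$ and $w$ has strictly positive entries, $1-s=\ip{w}{v}>0$. Test the supremum in \eqref{eq:star-norm} with $u=x/\norm{x}_2\in B_+$. This gives
\[
  \starnorm{z-y}\geq \frac{t-s}{10}+\norm{x}_2-\frac{\ip{v}{x}}{\norm{x}_2}.
\]
Comparing with Step 1, the desired strict inequality follows as soon as
\[
  \frac{\ip{v}{x}}{\norm{x}_2}<\frac{1-s}{10}=\frac{\ip{w}{v}}{10}.
\]

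\emph{Step 3: the main point, $\ip{v}{x}/\norm{x}_2\to0$ as $\norm{x}_2\to\infty$ in $C$.} Here the choice of the slice enters. Positivity and $\ip{w}{x}\leq1$ give $x_n\leq 2^n$ for every $n$, so the normalized vectors $x/\norm{x}_2$ tend to zero coordinatewise. To make this quantitative, choose $N$ with $\bigl(\sum_{n>N}v_n^2\bigr)^{1/2}<\ip{w}{v}/20$. Then split the sum:
\[
  \frac{\ip{v}{x}}{\norm{x}_2}\leq \frac{1}{\norm{x}_2}\sum_{n\leq N}v_n2^n+\frac{\ip{w}{v}}{20}.
\]
The first term is below $\ip{w}{v}/20$ once $\norm{x}_2$ exceeds an explicit $R$ depending only on $v$ and $N$.

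I expect this step to be the main obstacle. It is the only place where the unbounded directions of $C$ must be shown to be asymptotically orthogonal to the fixed vector $v$. One has to use that $w$ has strictly positive entries (which is what forces $\ip{w}{v}>0$ and the coordinatewise decay), rather than any weak-compactness argument. Steps 1 and 2 are routine estimates from \eqref{eq:star-norm}.

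Combining the three steps, every $z\in G_C(p,y)$ satisfies $\norm{x}_2\leq R$. Hence $\hnorm{z}\leq\sqrt{1+R^2}$, and by \eqref{eq:norm-equivalence} the set $G_C(p,y)$ is bounded.
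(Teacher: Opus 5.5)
Your proof is correct, and Steps 1--2 are in substance the paper's argument. The paper also tests with $g=x/\norm{x}_2$, i.e.\ with the functional $\phi(s,x)=s/10+\ip{x}{g}$. It computes $\starnorm{z-p}=\norm{x}_2-(1-t)/10$ exactly once $\norm{x}_2\geq2$; you only need the upper bound, which you correctly prove for $\norm{x}_2>9/2$. Both arguments then reduce everything to the sign of $\phi(p-y)=(1-s)/10-\ip{v}{g}$.

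The genuine difference is Step 3. The paper argues by contradiction along a sequence escaping to infinity and extracts a weakly convergent subsequence of $g_k=x_k/r_k$. It deduces $g_k\rightharpoonup0$ from $\ip{w}{g_k}=d_k/r_k\to0$ together with $g_k\geq0$ and $w_n>0$, which gives $\ip{v}{g_k}\to0$. You replace this by the uniform bound $x_n\leq2^n$ and a tail split. That is a quantitative version of the same weak-null statement, since on bounded sets of $\ell_2$ coordinatewise convergence to $0$ is weak convergence to $0$.

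Your route is direct, needs no subsequences, and yields an explicit radius $R(y)=\max\{9/2,\,20\sum_{n\leq N}2^nv_n/\ip{w}{v}\}$. The paper's route is shorter and purely qualitative.

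Your side remark that one must use positivity of $w$ ``rather than any weak-compactness argument'' is inaccurate. The weak-compactness argument works fine, and it uses positivity of $w$ at exactly the same point, to force the weak limit $g$ to vanish.
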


\begin{proof}
Fix $y\in C\setminus\{p\}$ and suppose, to the contrary, that $G_C(p,y)$ is unbounded.  Choose
\[
  z_k=(s_k,x_k)\in G_C(p,y)
  \qquad\text{with}\qquad
  \starnorm{z_k}\longrightarrow\infty.
\]
By \eqref{eq:s-range} and the norm equivalence \eqref{eq:norm-equivalence},
\[
  r_k:=\norm{x_k}_2\longrightarrow\infty.
\]
After discarding finitely many terms, assume that $r_k\geq2$.  Put
\begin{equation}\label{eq:dg-def}
  d_k=1-s_k\in[0,1],
  \qquad
  g_k=\frac{x_k}{r_k}.
\end{equation}
Then $g_k\in H_+$ and $\norm{g_k}_2=1$.  Passing to a subsequence, we may suppose that $g_k\rightharpoonup g$ weakly in $\ell_2$.  The cone $H_+$ is weakly closed, so $g\in H_+$.  Since $z_k\in C$,
\[
  \ip{w}{g_k}
  =\frac{\ip{w}{x_k}}{r_k}
  =\frac{d_k}{r_k}
  \longrightarrow0.
\]
It follows that $\ip{w}{g}=0$.  Every coordinate of $w$ is strictly positive and $g\geq0$, hence $g=0$.  Consequently,
\begin{equation}\label{eq:g-weak-zero}
  g_k\rightharpoonup0.
\end{equation}

For each $k$, define
\begin{equation}\label{eq:phi-def}
  \phi_k(s,x)=\frac{s}{10}+\ip{x}{g_k}.
\end{equation}
Since $g_k\in B_+$, the definition of the norm gives
\begin{equation}\label{eq:phi-dominated}
  |\phi_k(s,x)|\leq\starnorm{(s,x)}
  \qquad((s,x)\in X).
\end{equation}

We next compute the norm of $z_k-p=(-d_k,x_k)$.  For $u\in B_+$,
\[
  0\leq\ip{x_k}{u}\leq r_k.
\]
The endpoints are attained at $u=0$ and $u=g_k$.  Therefore
\begin{equation}\label{eq:second-component}
  \sup_{u\in B_+}\left|-\frac{d_k}{10}+\ip{x_k}{u}\right|
  =\max\left\{\frac{d_k}{10},r_k-\frac{d_k}{10}\right\}
  =r_k-\frac{d_k}{10}.
\end{equation}
The last equality uses $r_k\geq2$ and $0\leq d_k\leq1$.  Moreover,
\begin{align}\label{eq:strict-dominance}
  \left(r_k-\frac{d_k}{10}\right)^2
  -\frac{16}{25}(r_k^2+d_k^2)
  &=\frac{36r_k^2-20r_kd_k-63d_k^2}{100}>0.
\end{align}
Indeed, on $[2,\infty)\times[0,1]$ the numerator is increasing in $r$, and at $r=2$ it equals
$144-40d-63d^2\geq41$.  Thus the second term in \eqref{eq:star-norm} strictly dominates the Hilbertian term, and \eqref{eq:phi-def}--\eqref{eq:second-component} yield
\begin{equation}\label{eq:norm-attainment}
  \starnorm{z_k-p}
  =r_k-\frac{d_k}{10}
  =\phi_k(z_k-p).
\end{equation}

Write $y=(b,v)$.  Since $y\in C$,
\begin{equation}\label{eq:one-minus-b}
  1-b=\ip{w}{v}.
\end{equation}
If $1-b=0$, then $v=0$ because $v\geq0$ and every coordinate of $w$ is positive; this would give $y=p$.  Hence
\begin{equation}\label{eq:positive-gap}
  1-b>0.
\end{equation}

Finally, $z_k\in G_C(p,y)$, so \eqref{eq:phi-dominated} and \eqref{eq:norm-attainment} imply
\begin{align*}
  0
  &\geq \starnorm{z_k-y}-\starnorm{z_k-p} \\
  &\geq \phi_k(z_k-y)-\phi_k(z_k-p)
   =\phi_k(p-y).
\end{align*}
On the other hand, by \eqref{eq:g-weak-zero},
\[
  \phi_k(p-y)
  =\frac{1-b}{10}-\ip{v}{g_k}
  \longrightarrow\frac{1-b}{10}>0,
\]
contradicting the preceding inequality.  Therefore $G_C(p,y)$ is bounded.
\end{proof}

\section{Proof of the main theorem}

\begin{proof}[Proof of Theorem~\ref{thm:main}]
Take $X=(\R\oplus\ell_2,\norm{\cdot}_*)$ with the norm \eqref{eq:star-norm}, and let $C$ be given by \eqref{eq:C-def}.  Proposition~\ref{prop:C-basic} shows that $C$ is nonempty, closed, convex, and unbounded.  Proposition~\ref{prop:bounded-fpp} gives the fixed point property on every bounded closed convex subset of $X$, while Proposition~\ref{prop:G-bounded} gives the localization hypothesis at $p$.  Corollary~\ref{cor:apex} therefore implies that $C$ has the fixed point property.
\end{proof}

\section*{AI disclosure}
The authors acknowledge that OpenAI's ChatGPT 5.5 was used in the preparation of this manuscript, specifically to generate a preliminary draft of the proof of the main theorem. The authors then carefully examined the proposed proof and substantially streamlined its arguments before incorporating the revised version into the paper. All statements, proofs, and references included in the final manuscript have therefore been thoroughly reviewed by the authors, who assume full responsibility for any errors or inaccuracies that may remain.



\end{document}